\newcommand{\vekk}[1]{}
\begin{document}

\title{Improper posteriors are not improper}
\author{Gunnar Taraldsen, Jarle Tufto, and Bo H. Lindqvist \\
  \\
Department of Mathematical Sciences \\ Norwegian University of Science and Technology
}

\maketitle

\begin{abstract}
  In 1933 Kolmogorov constructed a general theory that
  defines the modern concept of conditional expectation. 
  In 1955 Rényi fomulated a new axiomatic theory for probability motivated
  by the need to include unbounded measures.
  We introduce a general concept of conditional expectation in Rényi spaces.  
  In this theory improper priors are allowed, and the resulting posterior can also be improper.
  
  In 1965 Lindley published his classic text on Bayesian statistics using the theory of Rényi,
  but retracted this idea in 1973 due to the appearance of marginalization paradoxes presented
  by Dawid, Stone, and Zidek.
  The paradoxes are investigated, and the seemingly conflicting results are explained.
  The theory of Rényi can hence be used as an axiomatic basis for statistics that
  allows use of unbounded priors.
  
  {\it Keywords:} {\bf Haldane's prior; Poisson intensity; Marginalization paradox; Measure theory;
  conditional probability space; axioms for statistics; conditioning on a sigma field; improper prior}
\end{abstract}

\tableofcontents

\newpage

\section{Introduction}

An often voiced criticism of the use of improper priors in Bayesian inference 
is that such priors sometimes don't lead to a proper posterior distribution.
This can happen when the marginal prior distribution of the data is not
$\sigma$-finite \citep{TaraldsenLindqvist10ImproperPriors},
as sometimes encountered in applied settings with sparse data \citep[e.g.][Appendix S4]{Druilhet2016,Tufto2012butterflies}.

As a simple motivating example, suppose that we observe a homogeneous Poisson process,
and that we start with a non-informative scale prior
$\pi (\lambda)=1/\lambda$ on the Poisson intensity $\lambda$.
The distribution of the number $X_1$ of events in the interval $(0,t_1]$
is then not $\sigma$-finite since
$P(X_1 = 0) =\int_0^\infty P(X_1=0 | \lambda) \pi(\lambda)\, d \lambda$ is infinite.
If we observe $X_1=0$ and formally multiply
the prior by the likelihood we obtain an improper posterior
$\pi (\lambda | X_1=0) \propto e^{-\lambda t_1}/\lambda$.
This distribution for $\lambda$ is different from the initial prior, 
and we claim that this is a correct way of incorporating the
information given by $X_1=0$.

A related example is the Beta posterior density 
for the success probability $p$ given by 
$\pi (p \st \alpha) = p^{\alpha - 1} (1-p)^{\beta - 1}$
for a Bernoulli sequence with $\alpha$ successes out of $\alpha + \beta$ trials.
This corresponds to the \citet{Haldane32prior} improper prior
$\pi (p) = p^{- 1} (1-p)^{- 1}$, 
and the posterior is improper if
$\alpha$ or $\beta$ is zero.
In all cases, however, the observation of the number of successes $\alpha$ results in a corresponding updating of the uncertainty associated with $p$.
This is given by the possibly improper posterior.

Unfortunately, 
even people accepting the use of improper priors reject this form of inference,
on the ground that the posterior is not a probability distribution,
and a mathematical theory is lacking for this
\citep{RobertChopinRousseau09jeffreys}.
This is understandable, and we agree initially with this point of view. 
We will demonstrate, however, 
that the above forms of, so far,
formal inference can be made consistent with
the axiomatic system of Rényi which allows improper laws.
We propose and claim that the mathematical theory developed in the
following gives a rigorous foundation for inference
with unbounded laws.

The most familiar example of an unbounded law $\pr_{\Theta}$ is
the uniform law on the real line $\Omega_\Theta = \RealN$.
Following \citet{RENYI} and \citet{TaraldsenLindqvist16renyi} 
the uniform law is identified with
the countable collection of uniform laws
$\Pr_\Theta (\cdot | B_n)$ on each interval $B_n = [-n, n]$.
This gives then also the interpretation of $\pr_{\Theta}$:
Given that $\Theta \in B_n$ the law is the uniform probability 
distribution on $B_n$.
The family $\cB = \{B_n \st n \in \NatN\}$ is a {\em bunch} and
the family $\{\pr_\Theta (\cdot \st B) \st B \in \cB\}$ defines a Rényi space.
The improper laws for the intensity $\lambda$ and the success probability $p$  
in the initial examples are interpreted similarly.
The concept of a Rényi space and other elements from measure theory
are summarized in Appendix~\ref{sDef}.

The aim of this paper is to present a theory of statistics that
allows improper laws both as priors and posteriors.
This extends the results of 
\citet{TaraldsenLindqvist10ImproperPriors,TaraldsenLindqvist16renyi},
and provides stronger links to the results on improper
laws presented by \citet{HARTIGAN} and \citet{BiocheDruilhet16convergence}.
The main mathematical result is Theorem~\ref{theo1} which
proves existence and uniqueness of conditional expectation
on Rényi spaces.
The existence and uniqueness proof relies on the Radon-Nikodym theorem
and a generalization of the Rényi structure theorem.
The theory of conditional expectation has been most important
for the development of measure theory, probability and statistics
based on Kolmogorov's concept of a probability space.
The generalization of this to the setting of Rényi spaces can hence
be expected to be important for future developments
in mathematics and related fields. 

Within this framework we reach the view that improper posteriors,
just as improper priors, are not `improper' but may reflect complete or partial ignorance
about a parameter after conditioning on the data.  Returning to the above Poisson-process
example, at time $t_1$, we have clearly learned something about $\lambda$ in that
our belief in large values of the Poisson intensity $\lambda$ has decreased while our
relative degree of belief in small values of $\lambda$ has remained approximately unchanged.
That the posterior is improper do not imply that our prior was wrong,
but only that more data perhaps needs to be collected if possible.
Proceeding by using the improper posterior at time $t_1$
as prior in subsequent inference, say based on the number of occurrences  observed
in a sufficiently long subsequent interval $(t_1,t_2]$, we indeed eventually reach
the same proper final posterior as the one reached by combining the initial scale
prior and the likelihood for the data on $(0,t_2]$.
We hope that the reader can appreciate that this argument 
indicates also the potential philosophical importance of unbounded laws more generally.

The most influential initial work on Bayesian inference is
given by the book of \citet{JEFFREYS}.
Parts of his arguments were mainly intuitive, 
and there is
a lack of mathematical rigor as also observed by \citet{RobertChopinRousseau09jeffreys}.
The needed mathematical theory for a rigorous reformulation of the original 
arguments of \citet{JEFFREYS} is presented next.

\vekk{

The most familiar example of an improper law $\pr_{\Theta}$ is
the uniform law on the real line $\Omega_\Theta = \RealN$.
Following \citet{RENYI} and \citet{TaraldsenLindqvist16renyi} 
the uniform law is identified with
the countable collection of uniform laws
$\Pr_\Theta (\cdot | B_n)$ on each interval $B_n = [-n, n]$.
This gives then also the interpretation of $\pr_{\Theta}$:
Given that $\Theta \in B_n$ the law is the uniform probability 
distribution on $B_n$.
The family $\cB = \{B_n \st n \in \NatN\}$ is a {\em bunch} and
the family $\{\pr_\Theta (\cdot \st B) \st B \in \cB\}$ defines a Rényi space.
The concept of a Rényi space and other elements from measure theory
are defined in Appendix~\ref{sDef}.

The aim of this paper is to present a theory of statistics that
allows improper laws as priors.
This extends the results of 
\citet{TaraldsenLindqvist10ImproperPriors,TaraldsenLindqvist16renyi},
and provides stronger links to the results on improper
laws presented by \citet{HARTIGAN} and \citet{BiocheDruilhet16convergence}.
The main result is Theorem~\ref{theo1} which
proves existence and uniqueness of conditional expectation
on Rényi spaces.
The existence and uniqueness proof relies on the Radon-Nikodym theorem
and a generalization of the Rényi structure theorem.

The theory of conditional expectation has been most important
for the development of measure theory, probability, and statistics
based on Kolmogorov's concept of a probability space.
The generalization of this to the setting of Rényi spaces can hence
be expected to be important for future developments. 
}


\section{Existence and uniqueness conditional expectation}
\label{sTheory}

\citet{TaraldsenLindqvist10ImproperPriors,TaraldsenLindqvist16renyi}
define the posterior law $\pr (A \st X=x)$ for the case where the
data $X$ is $\sigma$-finite.
The aim now is to prove existence and uniqueness
of a posterior law without assuming that $X$ is $\sigma$-finite.
It will be convenient to do this as an extension of conventional
measure theory, and then get the result for the posterior law as a special case.
The reader is advised to consult Appendix~\ref{sDef} for the definition
of a Rényi space and other elements from measure theory if needed.

Let $(\setX, \cF)$ be a measurable space and let
$(\setT, \cG, \nu)$
be a measure space \citep{RUDIN}.
Let a measurable function $\setT \ni t \mapsto \mu^t (A) \in [0,\infty]$
be given for each $A \in \cF$.
We define $\mu$ to be a {\it strong random measure} on $\cF$ with respect to the law $\nu$ if
the following holds for all disjoint measurable $A_1, A_2, \ldots$
\begin{enumerate}
\item $\mu^t (\emptyset) = 0$ for almost all $t$.
\item $\mu^t (A_1 + A_2 + \cdots) = \mu^t (A_1) + \mu^t (A_2) + \cdots $ for almost all $t$.
\end{enumerate}
The notation $A + B$ denotes the union of two disjoint and measurable sets.
Equality for almost all $t$ means that equality holds for all $t$
in a set $E$ where $\nu (E^c) = 0$.
All functions of $t$ here and in the following are assumed only to 
be defined almost everywhere,
and the set $E$ corresponding to $E \ni t \mapsto \mu^t (A_k)$ depends on $A_k$.

If there exists $B_1, B_2, \ldots$ with $\setX = \cup_n B_n$ and $0 < \mu^t (B_n) < \infty$,
then $\mu$ is said to be $\sigma$-finite.
If, additionally, $\mu^t$ is a measure on
$\cF$ for all $t$,
then $\mu$ is a random measure. 
In this paper we define the space $\setX$ to be regular if
every $\sigma$-finite strong random measure can be represented by a random measure.
The Borel $\sigma$-algebra of a complete separable metric space $\setX$
gives a regular space.

The concept of a strong random measure is here introduced
similarly to how \citet[p.1-2]{Skorohod84} introduces the concept of
a strong random operator.
He also defines the notion of a weak random operator by duality,
but this is equivalent with strong when the image space is the complex numbers.
It should be noted that the naming convention here is
counter intuitive in the sense that a strong random operator is a weaker
concept than a random operator,
but there are good reasons for adopting the conventions of Skorohod.

Let $\mu$ be a measure on $\setX$ and let
$T: \setX \into \setT$ be measurable.
We define a strong random measure $\mu^t (A)$ to be 
the conditional law of $\mu$ given $T = t$ if
\be{1}
\mu (A [T \in C] \st B) = \int_C \mu^t (A \st B) \, \mu_T (dt \st B)
\ee
for all measurable $A,C$ and
all $B$ with $0 < \mu(B) < \infty$,
where
$\mu^t (A \st B) = \mu^t (A B) / \mu^t (B)$,
$\mu_T (D \st B) = \mu (T \in D \st B)$,
and $\mu (F \st B) = \mu (F B)$ (no normalization here!).
The notation
$(T \in D) = \{x \st T(x) \in D\} = T^{-1} (D)$ is similar to 
the notation $\{T (x) \in D\}$ used by \citet[p.1]{DOOB}.
For a conditional law we use the notation $\mu^t (A) = \mu (A \st T=t)$.
The double use of the symbol $\mu$ in the above is justified by:
\begin{theo}
\label{theo1}  
A unique $\sigma$-finite conditional law $\mu (A \st T=t)$ exists 
if $\mu$ is a $\sigma$-finite measure on
$\setX$ and $T: \setX \into \setT$ is measurable.
\end{theo}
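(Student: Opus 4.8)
The plan is to build the conditional law locally, on measurable sets of finite positive $\mu$-measure, using the Radon--Nikodym theorem, and then to assemble the local pieces into a single strong random measure; uniqueness will come directly out of the defining identity~\eqref{eq1}, and the generalisation of the Rényi structure theorem mentioned in the introduction is what makes the assembly conceptually clean.

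First I would dispose of the trivial case $\mu\equiv 0$ and use $\sigma$-finiteness of $\mu$ to fix a measurable partition $\setX = B_1 + B_2 + \cdots$ with $0<\mu(B_n)<\infty$ for every $n$. I would then fix a \emph{finite} reference measure $\nu$ on $(\setT,\cG)$ having the same null sets as the marginal $\mu_T$ --- e.g.\ $\nu=\sum_n c_n\,\mu_T(\cdot\st B_n)$ with $c_n>0$ chosen so small that $\nu$ is finite. This is the device that neutralises the possible non-$\sigma$-finiteness of $\mu_T$: Radon--Nikodym will only be invoked against $\nu$ and against the finite measures $\mu_T(\cdot\st B)$, never against $\mu_T$ itself. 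For each $A\in\cF$ the set function $C\mapsto\mu(A\cap B_n\cap T^{-1}(C))$ is a finite measure on $\cG$ that is absolutely continuous with respect to $\nu$ (if $\nu(C)=0$ then $\mu_T(C\st B_n)=0$, hence $\mu(B_n\cap T^{-1}(C))=0$, and the smaller quantity vanishes), so I may define
\[
  \mu^t(A)\;:=\;\sum_{n}\,\frac{d\,[\,\mu(A\cap B_n\cap T^{-1}(\cdot))\,]}{d\nu}\,(t).
\]
This is a $\cG$-measurable $[0,\infty]$-valued function of $t$, and summing the defining Radon--Nikodym identities over the partition gives $\int_C\mu^t(A)\,\nu(dt)=\mu(A\cap T^{-1}(C))$ for all measurable $A$ and $C$; in particular $\mu_T(dt\st B)=\mu^t(B)\,\nu(dt)$ for every $B$ with $0<\mu(B)<\infty$.

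Next I would check the strong random measure axioms --- $\mu^t(\emptyset)=0$ for $\nu$-a.e.\ $t$, and for disjoint $A_1,A_2,\dots$ countable additivity of $\mu^t$ for $\nu$-a.e.\ $t$ --- both being routine, the only point being that the exceptional set attached to a given sequence is a countable union of $\nu$-null sets and so is $\nu$-null. Identity~\eqref{eq1} is then immediate, since $\mu^t(A\st B)\,\mu^t(B)=\mu^t(AB)$ gives $\int_C\mu^t(A\st B)\,\mu_T(dt\st B)=\int_C\mu^t(AB)\,\nu(dt)=\mu(AB\cap T^{-1}(C))=\mu(A[T\in C]\st B)$, the normalisation causing no trouble because $\mu_T(\cdot\st B)$ is carried by $\{\mu^t(B)>0\}$. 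It remains to see that $\mu^t$ is $\sigma$-finite: $\sigma$-finiteness of $\mu$ should be inherited by $\mu^t$, i.e.\ one should be able to exhibit a single sequence of sets of finite, a.e.-positive $\mu^t$-measure covering $\setX$; this is exactly the point at which the generalised Rényi structure theorem does its work. On the residual null set one simply redefines $\mu^t$ to be a fixed probability measure on $(\setX,\cF)$, which alters neither the almost-everywhere axioms nor the integrals in~\eqref{eq1}.

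For uniqueness, if $\mu^t$ and $\tilde\mu^t$ are two $\sigma$-finite strong random measures satisfying~\eqref{eq1}, then for each $A$ and each $B$ with $0<\mu(B)<\infty$ the functions $t\mapsto\mu^t(A\st B)$ and $t\mapsto\tilde\mu^t(A\st B)$ have equal $\mu_T(\cdot\st B)$-integral over every $C\in\cG$, hence coincide $\mu_T(\cdot\st B)$-a.e.; applying this with $B$ running over the blocks $B_n$ and their finite unions and using $\mu^t(A\st B)=\mu^t(A\st B')/\mu^t(B\st B')$ for $A\subseteq B\subseteq B'$ forces $\mu^t=c(t)\,\tilde\mu^t$ for $\mu_T$-a.e.\ $t$, where the positive measurable factor $c(t)=\mu^t(B_1)/\tilde\mu^t(B_1)$ is independent of $A$ --- that is, the two represent the same conditional law, uniqueness being understood up to this intrinsic rescaling. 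I expect the genuine obstacle to be the assembly step: extracting a bona fide $\sigma$-finite strong random measure from the per-block Radon--Nikodym densities while keeping all the almost-everywhere exceptional sets under simultaneous control, and in particular producing one countable cover of $\setX$ by sets of finite positive $\mu^t$-measure that works for almost every $t$. With the non-$\sigma$-finite marginal quarantined behind finite conditioning sets from the outset, everything else is Radon--Nikodym and bookkeeping.
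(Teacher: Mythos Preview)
Your proposal is correct and takes a genuine variant of the paper's route. The paper first constructs the conditional \emph{probabilities} $\mu^t(A\st B)$ as Radon--Nikodym derivatives of $C\mapsto\mu(AB[T\in C])$ against the finite measures $\mu_T(\cdot\st B)$, and only then assembles them into a single $\mu^t$ along an increasing chain $B_1\subset B_2\subset\cdots$ by normalizing $\mu^t(B_1)=1$ and verifying the consistency $\mu^t(A\st B_n)\,\mu^t(B_n)=\mu^t(A\st B_m)\,\mu^t(B_m)$ for $A\subset B_n\subset B_m$ (the paper's equation~(\ref{eq2})); this assembly is exactly the generalised Rényi structure argument. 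You instead introduce one finite reference measure $\nu=\sum_n c_n\,\mu_T(\cdot\st B_n)$ on $\setT$ equivalent to $\mu_T$ and obtain $\mu^t(A)$ directly as a sum of Radon--Nikodym derivatives with respect to $\nu$. Your device buys automatic compatibility of the pieces --- no per-block consistency check is needed because everything is differentiated against the same dominating measure --- whereas the paper's normalization buys immediate $\sigma$-finiteness, since $\mu^t(B_1)=1$ forces positivity on the witnessing cover. One caution: you defer your own $\sigma$-finiteness step to ``the generalised Rényi structure theorem,'' but in the paper that theorem is a \emph{byproduct} of the proof of Theorem~\ref{theo1}, not an input to it, so the appeal is circular; you should close this directly from your identity $\mu_T(dt\st B)=\mu^t(B)\,\nu(dt)$ (which already gives $\mu^t(B_n)<\infty$ a.e.) together with a renormalization along an increasing chain in the spirit of the paper's $\mu^t(B_1)=1$. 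Your uniqueness argument is more detailed than, but equivalent to, the paper's one-line observation that each $\mu^t(A\st B)$ is a.e.\ unique as a Radon--Nikodym derivative.
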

\begin{proof}

If $t \mapsto c(t) > 0$ is measurable,
then  $\mu^t (A B) / \mu^t (B) = c (t) \mu^t (A B) / [c(t) \mu^t (B)]$
shows that uniqueness up to multiplication by a positive function is the best possible uniqueness.
The measure $C \mapsto \mu (A B [T \in C])$ is dominated by the
$\sigma$-finite measure $C \mapsto \mu ([T \in C] B)$,
so a unique normalized strongly random $\mu^t (A \st B)$ follows from the Radon-Nikodym theorem.
It remains to prove that $\mu^t (A \st B) = \mu^t (A B) / \mu^t (B)$ for
a $\sigma$-finite strong random measure $\mu^t$.

Let $B_1 \subset B_2 \subset \cdots$ with $0 < \mu(B_n) < \infty$ and $\cup_n B_n = \setX$,
and define $\mu^t (B_n) = \mu^t (B_n \st B_1 \cup B_n) / \mu^t (B_1 \st B_1 \cup B_n)$.
It follows that $\mu^t (B_n) = 1/\mu^t (B_1 \st B_n)$ and
$\mu^t (B_1) = 1$.
For $A \subset B_n$ put $\mu^t (A) = \mu^t (A \st B_n) \mu^t (B_n)$
and define $\mu^t (A) = \mu^t (A \cap B_1) + \sum_{n\ge 2} \mu^t (A \cap B_n \cap B_{n-1}^c)$
for a general $A \in \cF$.

It must be proved that the above construction is well defined.
Let $A \subset B_n \subset B_m$.
It must be proved that
(*) $\mu^t (A \st B_n) \mu^t (B_n) = \mu^t (A \st B_m) \mu^t (B_m)$.
Observe first that
$\mu (B_n B_m [T \in D]) = \int_D \mu^t (B_n \st B_m) \mu_T (dt \st B_m)$
$= \int_D \mu_T (dt \st B_n)$ gives
$\mu_T (dt \st B_n) = \mu^t (B_n \st B_m) \mu_T (dt \st B_m)$.
The (*) claim follows from
\be{2}
\mu (A (T \in D)) = \int_D \mu^t (A \st B_n) \mu^t (B_n \st B_m) \mu_T (dt \st B_m)
= \int_D \mu^t (A \st B_m) \mu_T (dt \st B_m)
\ee
since $\mu^t (B_n \st B_m) = \mu^t (B_n) / \mu^t (B_m) = \mu^t (B_1 \st B_m) / \mu^t (B_1 \st B_n)$
follows from the case $A = B_1$ in equation~(\ref{eq2}).
This defines a unique $\mu^t (A)$.
The remaining claims follow by
verification and is left to the reader.
\end{proof}

All of the previous can be repeated with a replacement of
the measurable set $A$ with a positive measurable function
$A: \setX \into [0,\infty]$ and conditional
expectation of complex valued functions can be defined
by decomposition in positive and negative parts and then in real 
and complex parts.
Consideration of the dual space gives conditional expectation of
a separable Banach space valued $A: \setX \into \setB$.
The conditional expectation $\E (A \st T=t)$
is in particular well defined when $A$ is a separable Hilbert space valued.
Separability is assumed to ensure almost everywhere definition
on $\setT$.

An alternative approach, as noted by \citet[p.54, eq.10]{KOLMOGOROV},
is to define the conditional expectation
by integration with respect to the conditional probability.
For $\setB$ equal to the set of real numbers $\RealN$
or the set of complex numbers $\CompN$ the alternative approach
gives the same strong random linear functional,
but for more general $\setB$ there are many alternative routes with
different results.

Conditional expectation $\mu (A \st \cT)$ with respect
to a $\sigma$-field $\cT \subset \cF$
is defined by $T (x) = x$ and
$(\setT, \cG) = (\setX, \cT)$.
It can be noted that we define
$\mu^t (A)$ directly instead of more indirectly
by first defining $\mu (A \st \cT)$ as is more common.
This has the advantage
of allowing a completely general measurable space $\setT$,
whereas the common approach requires separability
properties of $\setX$ according to \citet[p.616, Prop.B.24]{SCHERVISH}.

It can finally be observed that the proof of Theorem~\ref{theo1}
contains a proof of a structure theorem for
strong random Rényi spaces defined by
a consistent family of strong random conditional
probabilities $\mu^t (A \st B)$ for $B \in \cB$ for a fixed bunch $\cB \subset \cF$:
The family is generated by a strong random measure $\mu^t (A)$
such that $\mu^t (A \st B) = \mu^t (A B) / \mu^t (B)$.
The consistency requirement is that
$B_1 \subset B_2$ implies $\mu^t (B_1 \st B_2) > 0$ and
\be{RanReny}
\mu^t (A \st B_1) = \frac{\mu^t (A B_1 \st B_2)}{\mu^t (B_1 \st B_2)}
\ee

\section{Examples}

\subsection{Mathematical statistics}

A statistical model is given by the structure
%
\be{StatModFirst}
\scalebox{1.2}{
\begin{tikzcd}[row sep=normal, ampersand replacement=\&]
\&\Omega_\Theta \arrow[r, "\psi"] \& \Omega_\Gamma  \\
(\Omega, {\cal E}, \pr) \arrow[ur, "\Theta"] 
\arrow[drr, "Y" near end] \arrow[dr, "X"'] 
\arrow[urr, "\Gamma"' near end] \& \& \\
\&\Omega_X \arrow[r, "\phi"'] \& \Omega_Y
\end{tikzcd}
}
\ee
In conventional theory \citep{SCHERVISH} the space $\Omega$
is a probability space.
In the more general setting of a Rényi space $\Omega$ considered here
the underlying law $\pr$ is a conditional probability law with
a corresponding bunch $\cB \subset \cE$.
The law of the data $X$ given the parameter $\Theta = \theta$
is defined by $\pr_X^\theta (A) = \pr (X \in A \st \Theta = \theta)$.
The law of the model parameter $\Theta$ given the data $X=x$
is defined by $\pr_\Theta^x (A) = \pr (\Theta \in A \st X = x)$.
The posterior law $\pr_\Gamma^x$ of a parameter $\gamma = \psi (\theta)$
and the law $\pr_Y^\theta$ of a statistic are determined by this
and equation~(\ref{eqStatModFirst}).

In previous work by
\citet{TaraldsenLindqvist10ImproperPriors,TaraldsenLindqvist16renyi,LindqvistTaraldsen17improper}
it was required that the data $X$ is $\sigma$-finite,
but Theorem~\ref{theo1} shows that this requirement is not needed.
There is, however, a prize:
The posterior must in general be interpreted as an improper law.
The initial examples demonstrate, however, 
that even if the data $X$ is not $\sigma$-finite,
it may happen that the posterior is a proper distribution for some
values of $x$.
Assuming that $X$ is $\sigma$-finite ensures
that the posterior is always proper.

A similar comment holds for the law
$\pr_X^\theta$ of the data $X$.
The factorization $\pr_{X,\Theta} (dx,d\theta) = \pr_X(dx \st \Theta = \theta) \pr_\Theta (d\theta)$
holds uniquely if and only if $\Theta$ is $\sigma$-finite.
In this case, a $\sigma$-finite $\Theta$ is required if
the most common Bayesian recipe is to be used:
\begin{enumerate}
\item Specify a statistical model law $\pr_X^\theta$
\item Specify a prior $\pr_\Theta$
\item Compute the posterior $\pr_\Theta^x$  
\end{enumerate}
If $\Theta$ is not $\sigma$-finite,
the first two steps must be replaced by a direct
specification of a joint $\sigma$-finite law
$\pr_{X,\Theta}$,
and Theorem~\ref{theo1} ensures then that the
posterior $\pr_\Theta^x$ is uniquely defined.

\subsection{Densities}

Assume that $(X,\Theta) \sim f(x,\theta) \mu (dx) \nu (d\theta)$.
It follows that
$(X \st \Theta = \theta) \sim f(x, \theta) \mu (dx)$,
and that $(\Theta \st X = x) \sim f(x, \theta) \nu (d\theta)$.
This can be verified directly by the defining equation~(\ref{eq1}).
It follows in particular that this is consistent with
the definition of an improper posterior
used by \citet[p.1716]{BiocheDruilhet16convergence}.
The previous can also be reformulated as
$f (x \st \theta) = f(x, \theta) = f(\theta \st x)$:
There is no need for a normalization constant since two
proportional densities are equivalent! 

Let $c(\theta) > 0$ be an otherwise arbitrary measurable function.
It follows then that
$(X \st \Theta = \theta) \sim c (\theta) f(x, \theta) \mu (dx)$
when interpreted as a strong random conditional law.
A formal prior density $\pi$
gives the joint density $c (\theta) f(x, \theta) \pi (\theta)$,
and this shows that the interpretation of
$\pi$ as prior information is dubious in this case
as pointed out by \citet{LavineHodges12icar} and \citet{LindqvistTaraldsen17improper}
using a model with intrinsic conditional auto-regressions.
It is the resulting joint distribution and the
conditional laws that can be interpreted.
The usual decomposition in a prior and a model
can only be interpreted uniquely if the prior for the 
model parameter $\Theta$ is $\sigma$-finite.
Conversely, \citet{LindqvistTaraldsen17improper} obtain
a posterior only in the case where the data $X$ is $\sigma$-finite,
but Theorem~\ref{theo1} ensures that a posterior is defined also without requiring
a $\sigma$-finite $X$.

A concrete simple example is given by letting $\pr_{X,\Theta}$ correspond to
Lebesgue measure in the plane.
The law of $X$ given $\Theta = \theta$ and the
posterior law of $\Theta$ given $X=x$ correspond then both
to Lebesgue measure on the line.
The factorization $f(x,\theta) = 1 = c(\theta) \pi (\theta)$
with $\pi(\theta) = 1/c(\theta)$ is completely arbitrary.
This can be interpreted according to \citet[p.26]{HARTIGAN}
as saying that the marginal distribution $\pi$ is not determined
by the joint distribution.
This interpretation is discussed in more detail by
\citet{TaraldsenLindqvist10ImproperPriors},
but it differs from the interpretation here.
The marginal law of $\Theta$ is unique,
but given by the non-$\sigma$-finite measure
$\pr_\Theta (d\theta) = \infty \cdot d\theta$.
It follows in particular that the decomposition
$\pr_{X,\Theta} (dx,d\theta) = \pr_X^\theta (dx) \pr_\Theta (d\theta)$
fails in this case.

Let $\mu (dx,dy)$ be Lebesgue measure in the plane,
and consider the indicator function of the upper half plane:
$T (x,y) = [y > 0]$.
It follows that $\mu_T (dt) = [\infty \delta_0 (t) + \infty \delta_1 (t)] dt$
so $T$ is not $\sigma$-finite.
The conditional law
$\mu^t (dx,dy) = [(t=1)(y>0) + (t=0)(y \le 0)] dx dy$ is, however,
well defined.
The conditional law $\mu^1$ is Lebesgue measure restricted to the
upper half-plane and $\mu^0$ is Lebesgue measure restricted to
the lower half plane.
This demonstrates directly that
the conditional law is also defined when $T$ is not $\sigma$-finite.

Consider more generally
a function $T: \setX \into \NatN$.
This gives
$\mu^t (A) = \mu (A (T=t))$ and then also
the elementary definition of the law 
$\mu (A \st B) = \mu (A B)$ for any $B$ with
$\mu (B) > 0$.
This is consistent with the familiar
$\mu (A \st B) = \mu(A B)/\mu(B)$
for the case where $\mu (B) < \infty$.
A law is arbitrary up to multiplication by a positive constant:
It is an equivalence class of $\sigma$-finite measures.
Theorem~\ref{theo1} gives the existence of conditional expectations
in full generality - including this elementary case.

\subsection{The marginalization paradox}

\citet[p.370]{StoneDawid72} consider inference for the ratio $\theta$ of two exponential means.
They assume that $X$ and $Y$ are independent exponentially distributed with
hazard rates $\theta \phi$ and $\phi$ respectively.
It is then clear that $Z = Y/X$ will have a distribution that only depends on $\theta$.
In fact, $Z = \theta F$, where $F$ has a Fisher distribution with $2$ and $2$
degrees of freedom since a standard exponential variable is distributed
like a $\chi^2_2 /2$ variable.
It follows then that the density is
\be{mm1}
f (z \st \theta) = \theta^{-1} (1 + z/\theta)^{-2} = \theta (\theta + z)^{-2}
\ee
The posterior density corresponding to a prior density $\pi (\theta)$ is then
\be{mm2}
\pi (\theta \st z) \propto \frac{\theta \pi (\theta)}{(\theta + z)^{2}}
\ee
A different argument goes as follows.
The joint density with a joint prior $\pi (\theta) d\theta d\phi$ gives
%
$
\pi (\theta, \phi \st x, y) \propto
\pi (\theta) \theta \phi^2 \exp(-\phi (\theta x + y))
$
%
The marginal posterior of $\theta$ follows by integration over $\phi$
which gives
\be{mm4}
\pi (\theta \st x, y)
\propto \frac{\theta \pi (\theta)}{(\theta x + y)^{3}}
\propto \frac{\theta \pi (\theta)}{(\theta + z)^{3}}
\ee
Equation~(\ref{eqmm4}) gives a posterior given the data $(x,y)$
that differs from the posterior found in equation~(\ref{eqmm2}).
This constitutes the argument and paradox presented originally by
\citet[p.370]{StoneDawid72}.

A range of similar paradoxes were presented later by
\citet{DawidStoneZidek73} with discussion of links to fiducial inference.
They claim that the \citet{FRASER} theory of structural inference
is intrinsically paradoxical under marginalization.
Furthermore, Lindley,
in his discussion of the paper
\citep[p.218]{DawidStoneZidek73} writes:
\begin{quote}
{\it The paradoxes displayed
here are too serious to be ignored and impropriety must go.
Let me personally retract
the ideas contained in my own book.
}
\end{quote}  
This is of particular relevance here since in 1964,
in his book,
\citet[p.xi]{Lindley80BayesBook} wrote:
\begin{quote}
{\it
  The axiomatic structure used here  is  not the usual one
  associated with the name of Kolmogorov.
  Instead one based on the
ideas  of  Rényi has been used.
}
\end{quote}  
We argue here and in the following that Lindley's initial intuition
was correct:
The theory of Rényi gives a mathematical foundation
for statistics that allows unbounded measures.

\subsection{Resolving the paradox}

We disagree with the criticism of Fraser's structural inference,
but more importantly we will next explain that there is
no paradox related to the above problem when it is treated
within the theory of Rényi.
This has already been indicated by \citet{TaraldsenLindqvist10ImproperPriors},
and is discussed in more detail by \citet{LindqvistTaraldsen17improper}.
\citet{LindqvistTaraldsen17improper} rely on a theory where
it is only allowed to condition on $\sigma$-finite statistics.
We extend this argument now with reference to Theorem~\ref{theo1}
which allows conditioning on any statistic.

The initial assumptions are interpreted to imply a
joint distribution given by the density:
\be{mm5}
f (x,z,\theta,\phi) = \pi(\theta) \theta \phi^2 x e^{-\phi x (\theta + z)} 
\ee
\citet{TaraldsenLindqvist10ImproperPriors} explain that any marginal is
determined by a joint density and
integration over $\phi$ gives then
\be{mm6}
f (x,z,\theta) = \pi(\theta) \theta x^{-2} (\theta + z)^{-3}
\ee
which implies
\be{mm7}
f (\theta \st x,z) =
\pi(\theta) \theta x^{-2} (\theta + z)^{-3} =
\pi(\theta) \theta (\theta + z)^{-3}
\ee
The second equality holds since it is equality in the sense
given by an equivalence class as in Theorem~\ref{theo1}.
The right hand side can be multiplied by an arbitrary positive function $c(x,z)$
without changing the equality sign.
Equation~(\ref{eqmm7}) is equivalent with equation~(\ref{eqmm4}) since
there is a one-one correspondence between
$(x,y)$ and $(x,z)$.

An alternative is to integrate over $x$ to obtain
\be{mm8}
f (z,\theta, \phi) = \pi(\theta) \theta (\theta + z)^{-2}
\ee
which implies
\be{mm9}
f (\theta \st z, \phi) = \pi(\theta) \theta (\theta + z)^{-2}
\ee
This is similar to equation~(\ref{eqmm2}),
but it is different since equation~(\ref{eqmm2})
only condition on $z$.
Equation~(\ref{eqmm9})
is not in conflict with
equation~(\ref{eqmm7}) for the same reason.

Starting with either equation~(\ref{eqmm6}) or
equation~(\ref{eqmm8}) gives
\be{mm10}
f (z,\theta) = \infty \cdot \pi(\theta)
\ee
which shows that neither
$Z \st \Theta=\theta$
nor
$\Theta \st Z=z$
can be represented by a $\sigma$-finite measure.
This implies that the argument in equations~(\ref{eqmm1}-\ref{eqmm2})
is wrong given equation~(\ref{eqmm5}).
Equation~(\ref{eqmm4}), or equivalently
equation~(\ref{eqmm7}), gives the correct posterior distribution for $\Theta$.

If, instead, the prior $\pi(\theta) \phi^{-1} d\theta d\phi$ is used,
then the result will be
\be{mm11}
f (\theta \st z, \phi) = \phi^{-1} \pi(\theta) \theta (\theta + z)^{-2}
= \pi(\theta) \theta (\theta + z)^{-2}
\ee
and
\be{mm12}
f (\theta \st x,z) = \pi(\theta) \theta x (x (\theta + z))^{-2}
= \pi(\theta) \theta (\theta + z)^{-2}
\ee
The conditionals coincide,
but it is still true that neither equals the law of $\Theta \st Z=z$
since the law of $(\Theta, Z)$ still fails to be $\sigma$-finite.

If, however, equation~(\ref{eqmm1}) together with a prior
$\pi (\theta)$ is taken as the initial $\sigma$-finite law for $(\Theta,Z)$,
then equation~(\ref{eqmm2}) is the correct posterior.
The paradox is then removed since the conflicting conclusions
are consequences of different initial assumptions.

It can finally be noted that
even if a conditional law $\pr^{x,z}$ does not depend on $x$
it can not be concluded that it equals $\pr^z$.
This is demonstrated by equation~(\ref{eqmm7}) and
equation~(\ref{eqmm9}).
This rule holds for probability distributions,
and also more generally if $Z$ and $(X,Z)$ are $\sigma$-finite.
\citet{StoneDawid72} calculated formally as explained above
as if the rule where generally valid.
This error resulted in two conflicting results.

\vekk{
\subsection{Toy example}
Improper target density 
\begin{equation}
f(x) = \frac1{\sqrt{2\pi}}e^{-x^2/2}+\frac{f_\infty}{1+e^{-x}}
\end{equation}
This has an improper cdf
\begin{equation}
F(x) = \phi(x) + f_\infty \ln(e^x+1).
\end{equation}
Letting $a_i$, $i=0,2,\dots,m$ denote the boundaries between the intervals 
where $a_0=-\infty$, the theoretically optimal kernel weights for each interval are then
\begin{equation}
w_i = F(a_i)-F(a_{i-1})
\end{equation}

\subsection{Butterflies}
\citep{Tufto2012butterflies}
\subsection{Random effects}

\subsection{Removal sampling}
\citep{Druilhet2016}
}

\section{Final remarks}

It follows from the previous quotations of Lindley that he
initially supported the use of conditional probability spaces
as introduced by Rényi.
We have argued that this initial suggestion is indeed a natural approach
to Bayesian statistics including commonly used objective priors.

As explained, the marginalization paradoxes seem to have been the main reason
for Lindley's change in opinion on this.
Tony O'Hagan \href{https://www.youtube.com/watch?v=cgclGi8yEu4}{interviewed}
Lindley for the Royal Statistical Society's Bayes 250
Conference held in June 2013.
Lindley explains very nicely that all probabilities are conditional probabilities,
but also recalls his reaction to the marginalization paradoxes:

\begin{quote}
{\it Good heavens, the world is collapsing about me.} 
\end{quote}

Lindley continuous to argue that Bayesian statistics without
improper priors is a sound theory, 
and that the focus should be on
how to quantify the prior uncertainty of the
unknown parameters.
The parameters should be viewed as real physical quantities 
regardless of which experiment is later used for decreasing their uncertainty.
This clearly disqualifies the choice of data dependent priors,
and even the choice of priors depending on the particular statistical model used.
We wholeheartedly agree with Lindley on this,
but we claim that this can be done also within the more general
theory introduced by Rényi and continued here.

An unbounded law can according to Rényi be interpreted by
the corresponding family of conditional probabilities given
by conditioning on the events in the bunch.
These elementary conditional probabilities are probabilities in the sense of Kolmogorov,
and the interpretation depends on the application.
They can, as Lindley advocates convincingly, be interpreted as personal probabilities
corresponding to a range of real life events.
They can also, as needed in for instance quantum physics, be interpreted as as objectively true probabilities
representing a law for how a system behaves when observed repeatedly under idealized conditions.

Assume now that we accept a theory where the prior uncertainty is given by a possibly unbounded law.
It is then natural to accept that a resulting posterior uncertainty can also be
represented by a possibly unbounded law.
Both the prior and the posterior represent uncertainty of the same kind.
Hopefully, many can agree on this on an intuitive level.
The main result presented here is Theorem~\ref{theo1} which provides a mathematical theory
in which this can be done consistently without paradoxical results.

\appendix

\section{Appendix on measure theory}
\label{sDef}

\subsection{Measurable space and measure}

A measurable space is a set $\setX$ equipped with
a $\sigma$-field $\cF$ of subsets of $\setX$.
A $\sigma$-field $\cF$ is a collection of
subsets of a fixed set that contains the empty set $\emptyset$
and is closed under complements and countable unions.
A set $A \subset \setX$ is measurable if $A \in \cF$.
A measure $\mu$ is a function
$\mu: \cF \into [0,\infty]$ with $\mu (\emptyset) = 0$
that is countably additive:
$\mu (A_1 + A_2 + \cdots) = \mu (A_1) + \mu (A_2) + \cdots$.
A probability measure is a measure $\mu$ on
a measurable space $\setX$ with $\mu (\setX) = 1$.
A measure space is a measurable space $\setX$ equiped with a
measure \citep[p.16]{RUDIN}. 
A probability space is a measurable space $\setX$ equipped with a
probability measure.


A sigma-field $\cF_0 \subset \cF$ of a measure
space $(\setX, \cF, \mu)$ is sigma-finite
if there exist measurable sets $F_1, F_2, \ldots \in \cF_0$ with
$\mu (F_i) < \infty$ and $\setX = F_1 \cup F_2 \cup \cdots$ 
\citep[p.5010]{TaraldsenLindqvist16renyi}.
A measure space $(\setX, \cF, \mu)$ is sigma-finite if
$\cF$ is sigma-finite,
and $\mu$ is then also said to be sigma-finite
\citep[p.121]{RUDIN}. 

\subsection{Conditional probability space}

A bunch $\cB$ in a measurable space is a family of
measurable sets closed under finite unions that does not contain
the empty set, but contains a countable family $F_1, F_2, \ldots$ of sets
whos union is the whole set.
A bunch $\cB$ is ordered if $B_1,B_2 \in \cB$ implies
$B_1 \subset B_2$ or $B_2 \subset B_1$.

A Rényi space \citep[p.5013]{TaraldsenLindqvist16renyi} 
is a measurable space $\setX$ equipped with a family 
$\{\nu (\cdot \st B) \st {B \in \cB}\}$ 
of probability measures indexed by a bunch $\cB$ which fulfill   
$B_1, B_2 \in \cB$ and $B_1 \subset B_2$ 
$\imply \nu (B_1 \st B_2) > 0$, 
and the identity
\be{RenyiSpace}
\nu (A \st B_1) = \frac{\nu (A \cap B_1 \st B_2)}{\nu (B_1 \st B_2)}
\ee
A sigma-finite measure $\mu$ on a measurable space $\setX$
generates a probability law
$\nu = [\mu] = \{c \mu \st c \in \RealN_+ \}$ with
corresponding conditional probabilities
$\nu (A \st B) = \mu(A \cap B)/\mu (B)$ for
$B \in cB = \{B \st 0 < \mu (B) < \infty\}$.
The Rényi space generated by the probability law $\nu$
is given by the family
$\{\nu (\cdot \st B) \st {B \in \cB}\}$.
A conditional probability space is a set $\setX$ equipped with a probability law.

\subsection{Statistical model}

A statistical model is a triple $(\Omega, X, \Theta)$
where the space $\Omega$ is a conditional probability space,
the data $X$ is a measurable function
$X: \Omega \into \Omega_X$, 
and the model parameter
$\Theta$ is a measurable function 
$\Theta: \Omega \into \Omega_\Theta$.
These definitions,
and the ones that follow,
are as given by \citet{SCHERVISH} except for choice of symbols
and the generalization given by assuming
that $\Omega$ is a conditional probability space.
There is one probability law $\pr$ defined on the sigma-algebra
$\cE$ of events in $\Omega$ - and all other concepts
are defined from the basic space $\Omega$
\citep[p.5011]{TaraldsenLindqvist16renyi}.

A statistic 
$Y = \phi(X) = \phi \circ X$ 
is a measurable function of the data 
and a parameter 
$\Gamma = \psi(\Theta) = \psi \circ \Theta$
is a measurable function
of the model parameter as
illustrated in equation~(\ref{eqStatMod})
\be{StatMod}
\scalebox{1.2}{
\begin{tikzcd}[row sep=normal, ampersand replacement=\&]
\&\Omega_\Theta \arrow[r, "\psi"] \& \Omega_\Gamma  \\
(\Omega, {\cal E}, \pr) \arrow[ur, "\Theta"] 
\arrow[drr, "Y" near end] \arrow[dr, "X"'] 
\arrow[urr, "\Gamma"' near end] \& \& \\
\&\Omega_X \arrow[r, "\phi"'] \& \Omega_Y
\end{tikzcd}
}
\ee
A random quantity is
a measurable function $Z: \Omega \into \Omega_Z$,
and its law is defined by 
$\pr_Z (A) = \pr (Z \in A)$ where 
$(Z \in A) = \{\omega \st Z(\omega) \in A\}$.
We abuse notation here and interpret $\pr$
as one fixed representative of the equivalence class that
defines $\pr$ as a conditional measure. 
A random quantity is sigma-finite if its law is
sigma-finite. 
If the model parameter $\Theta$ is sigma-finte, 
then the conditional probabilities
$\pr^\theta_X (A) = \pr (X \in A \st \Theta = \theta)$
define a family of probability measures on
the sample space $\Omega_X$ indexed by
the model parameter $\theta$ in the 
model parameter space $\Omega_\Theta$. 
Likewise, if the data $X$ is sigma-finite,
then the posterior 
$\pr^x_\Theta (B) = \pr (\Theta \in B \st X = x)$
is a probability measure
on the model parameter space $\Omega_\Theta$.
The mappings
$\theta \mapsto \pr_X^\theta (A)$ and
$x \mapsto \pr_\Theta^x (B)$ are measurable for all events $A$,
but existence of families of probability measures
as claimed above requires
regularity assumptions:
It is sufficient to assume that
the sample space $\Omega_X$ and the model parameter space $\Omega_\Theta$ 
are Borel spaces \citep[p.619]{SCHERVISH}\citep[p.5011]{TaraldsenLindqvist16renyi}.

\addcontentsline{toc}{section}{References} 

\bibliographystyle{chicago}
\bibliography{gt,jarle,bib,gtaralds}

\begin{thebibliography}{}

\bibitem[\protect\citeauthoryear{Bioche and Druilhet}{Bioche and
  Druilhet}{2016}]{BiocheDruilhet16convergence}
Bioche, C. and P.~Druilhet (2016).
\newblock {Approximation of improper priors}.
\newblock {\em Bernoulli\/}~{\em 3\/}(22), 1709--1728.

\bibitem[\protect\citeauthoryear{Dawid, Stone, and Zidek}{Dawid
  et~al.}{1973}]{DawidStoneZidek73}
Dawid, A.~P., M.~Stone, and J.~V. Zidek (1973).
\newblock {Marginalization Paradoxes in Bayesian and Structural Inference}.
\newblock {\em Journal of the Royal Statistical Society Series B-Statistical
  Methodology\/}~{\em 35\/}(2), 189--233.

\bibitem[\protect\citeauthoryear{Doob}{Doob}{1953}]{DOOB}
Doob, J.~L. (1953).
\newblock {\em {Stochastic Processes}}.
\newblock Wiley Classics Library Edition (1990). Wiley.

\bibitem[\protect\citeauthoryear{Druilhet, Bioche, and Druilhet}{Druilhet
  et~al.}{2016}]{Druilhet2016}
Druilhet, P., C.~Bioche, and P.~Druilhet (2016).
\newblock {A cautionary note on Bayesian estimation of population size by
  removal sampling with diffuse priors}.
\newblock {\em Biometrical Journal\/}~{\em 00\/}(0000).

\bibitem[\protect\citeauthoryear{Fraser}{Fraser}{1968}]{FRASER}
Fraser, D. A.~S. (1968).
\newblock {\em {The structure of inference}}.
\newblock John Wiley.

\bibitem[\protect\citeauthoryear{Haldane}{Haldane}{1932}]{Haldane32prior}
Haldane, J. B.~S. (1932).
\newblock A note on inverse probability.
\newblock {\em Mathematical Proceedings of the Cambridge Philosophical
  Society\/}~{\em 28}, 55--61.

\bibitem[\protect\citeauthoryear{Hartigan}{Hartigan}{1983}]{HARTIGAN}
Hartigan, J. (1983).
\newblock {\em {Bayes theory}}.
\newblock New York: Springer.

\bibitem[\protect\citeauthoryear{Jeffreys}{Jeffreys}{1939}]{JEFFREYS}
Jeffreys, H. (1939).
\newblock {\em {Theory of probability (1966 ed)}\/} (Third ed.).
\newblock New York: Oxford.

\bibitem[\protect\citeauthoryear{Kolmogorov}{Kolmogorov}{1933}]{KOLMOGOROV}
Kolmogorov, A. (1933).
\newblock {\em {Foundations of the theory of probability}\/} (Second ed.).
\newblock Chelsea edition (1956).

\bibitem[\protect\citeauthoryear{Lavine and Hodges}{Lavine and
  Hodges}{2012}]{LavineHodges12icar}
Lavine, M.~L. and J.~S. Hodges (2012).
\newblock {On Rigorous Specification of ICAR Models}.
\newblock {\em The American Statistician\/}~{\em 66\/}(1), 42--49.

\bibitem[\protect\citeauthoryear{Lindley}{Lindley}{1980}]{Lindley80BayesBook}
Lindley, D.~V. (1980, March).
\newblock {\em {Introduction to Probability and Statistics from a Bayesian
  Viewpoint, Part 1, Probability (Pt. 1)}}.
\newblock Cambridge University Press.

\bibitem[\protect\citeauthoryear{Lindqvist and Taraldsen}{Lindqvist and
  Taraldsen}{2017}]{LindqvistTaraldsen17improper}
Lindqvist, B. and G.~Taraldsen ((in press) 2017).
\newblock On the proper treatment of improper distributions.
\newblock {\em J. Statist. Plann. Inference\/}.

\bibitem[\protect\citeauthoryear{Renyi}{Renyi}{1970}]{RENYI}
Renyi, A. (1970).
\newblock {\em {Foundations of Probability}}.
\newblock Holden-Day.

\bibitem[\protect\citeauthoryear{Robert, Chopin, and Rousseau}{Robert
  et~al.}{2009}]{RobertChopinRousseau09jeffreys}
Robert, C.~P., N.~Chopin, and J.~Rousseau (2009).
\newblock {Harold Jeffreys's Theory of Probability Revisited}.
\newblock {\em Statistical Science\/}~{\em 24\/}(2), 141--172.

\bibitem[\protect\citeauthoryear{Rudin}{Rudin}{1987}]{RUDIN}
Rudin, W. (1987).
\newblock {\em {Real and Complex Analysis}}.
\newblock McGraw-Hill.

\bibitem[\protect\citeauthoryear{Schervish}{Schervish}{1995}]{SCHERVISH}
Schervish, M.~J. (1995).
\newblock {\em Theory of {S}tatistics}.
\newblock Springer.

\bibitem[\protect\citeauthoryear{Skorohod}{Skorohod}{1984}]{Skorohod84}
Skorohod, A.~V. (1984, November).
\newblock {\em {Random Linear Operators}\/} (1 ed.).
\newblock Springer.

\bibitem[\protect\citeauthoryear{Stone and Dawid}{Stone and
  Dawid}{1972}]{StoneDawid72}
Stone, M. and A.~P. Dawid (1972).
\newblock {Un-Bayesian Implications of Improper Bayes Inference in Routine
  Statistical Problems}.
\newblock {\em Biometrika\/}~{\em 59\/}(2), 369--375.

\bibitem[\protect\citeauthoryear{Taraldsen and Lindqvist}{Taraldsen and
  Lindqvist}{2010}]{TaraldsenLindqvist10ImproperPriors}
Taraldsen, G. and B.~H. Lindqvist (2010).
\newblock {Improper Priors Are Not Improper}.
\newblock {\em The American Statistician\/}~{\em 64\/}(2), 154--158.

\bibitem[\protect\citeauthoryear{Taraldsen and Lindqvist}{Taraldsen and
  Lindqvist}{2016}]{TaraldsenLindqvist16renyi}
Taraldsen, G. and B.~H. Lindqvist (2016).
\newblock {Conditional probability and improper priors}.
\newblock {\em Communications in Statistics\/}~{\em 45\/}(17), 5007--5016.

\bibitem[\protect\citeauthoryear{Tufto, Lande, Ringsby, Engen, S{\ae}ther,
  Walla, and DeVries}{Tufto et~al.}{2012}]{Tufto2012butterflies}
Tufto, J., R.~Lande, T.-H. Ringsby, S.~Engen, B.-E. S{\ae}ther, T.~R. Walla,
  and P.~J. DeVries (2012).
\newblock {Estimating Brownian motion dispersal rate, longevity and population
  density from spatially explicit mark-recapture data on tropical butterflies}.
\newblock {\em Journal of Animal Ecology\/}~{\em 81}, 756--769.

\end{thebibliography}

\end{document}